\newtheorem{theorem}{Theorem}[section]
\newtheorem{corollary}[theorem]{Corollary}
\theoremstyle{definition}
\newtheorem{Open Prob}[theorem]{Open Problem}
\theoremstyle{remark}
\newtheorem{remark}[theorem]{Remark}
\numberwithin{equation}{section}
\def\DJ{\leavevmode\setbox0=\hbox{D}\kern0pt\rlap{\kern.04em\raise.188\ht0\hbox{-}}D}
\begin{document}

\title[A generalization of the density zero ideal]{A generalization of the density zero ideal}

\author[S.\ Som]
{Sumit Som}

\address{           Sumit Som, Research Associate,
                    Department of Mathematics,
                    National Institute of Technology Durgapur, India.}
                   \email{somkakdwip@gmail.com}

\subjclass[2010]{40A35, 54A20, 03E15}
\keywords{Ideal convergence, density ideal.}

\begin{abstract}
Let $\mathscr{F}=(F_n)$ be a sequence of nonempty finite subsets of $\omega$ such that $\lim_n |F_n|=\infty$ and define the ideal 
$$\mathcal{I}(\mathscr{F}):=\left\{A\subseteq \omega:  |A\cap F_n|/|F_n|\to 0~\mbox{as}~n\to \infty \right\}.$$
The case $F_n=\{1,\ldots,n\}$ corresponds to the classical case of density zero ideal. We show that $\mathcal{I}(\mathscr{F})$ is an analytic P-ideal  but not $F_{\sigma}$. As a consequence, we show that the set of real bounded sequences which are $\mathcal{I}(\mathscr{F})$-convergent to $0$ is not complemented in $\ell_\infty$. 
\end{abstract}

\maketitle

\section{Introduction}

Let $\mathcal{I}$ be an ideal on the nonnegative integers $\omega$, that is, a collection of subsets of $\omega$ closed under subsets and finite unions. It is also assume, unless otherwise stated, that $\mathcal{I}$ is proper (i.e., $\omega \notin \mathcal{I}$) and admissible (i.e., $\mathcal{I}$ contains that ideal $\mathrm{Fin}$ of finite sets). 
$\mathcal{I}$ is said to be a P-ideal if it is $\sigma$-directed modulo finite sets. 
Moreover, $\mathcal{I}$ is said to be a density ideal if there exists a sequence $(\mu_n)$ of finitely additive measures $\mathcal{P}(\omega) \to \mathbf{R}$ supported on disjoint finite sets such that $\mathcal{I}=\{A\subseteq \omega: \lim_n \mu_n(A)=0\}$, cf. \cite{aa}. 
Lastly, we endow $\mathcal{P}(\omega)$ with the Cantor-space-topology, hence we may speak about analytic ideals, $F_\sigma$-ideals, etc.

At this point, let $\mathscr{F}=(F_n)$ be a sequence of nonempty finite subsets of $\omega$ such that $\lim_n |F_n|=\infty$ and define the ideal 
\begin{equation}\label{eq:mainideal}
\mathcal{I}(\mathscr{F}):=\left\{A\subseteq \omega: |A\cap F_n|/|F_n|\to 0~\mbox{as}~n\to \infty\right\}.
\end{equation}
This extends the classical density zero ideal $\mathcal{Z}$, which corresponds to the sequence $(F_n)$ defined by $F_n=\{1,\ldots,n\}$ for all $n \in \omega$. Similar ideals were considered in the literature, see e.g. \cite{bb, Jarek17}.

It is easy to see 
that the function
$$
\mathsf{d}^\star_\mathscr{F}: \mathcal{P}(\omega) \to \mathbf{R}: A\mapsto \limsup_{n\to \infty} |A\cap F_n|/|F_n|.
$$
is a monotone subadditive function, cf. also \cite[Example 4]{LeoTri} and the notion of abstract upper density given in \cite{DiNasso17}. It is not difficult to show that there exists a sequence $\mathscr{F}$ such that $\mathcal{I}(\mathscr{F})\neq \mathcal{Z}$: let $F_n:=[n!,n!+n] \cap \omega$ for all $n$ and $A:=\bigcup_n F_n$. Then $A\in\mathcal{Z} \setminus \mathcal{I}(\mathscr{F})$. Our main result follows.
\section{Main Results}

\begin{theorem}\label{thm}
$\mathcal{I}(\mathscr{F})$ is a density ideal.
\end{theorem}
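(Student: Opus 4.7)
The target is a sequence $(\mu_k)_{k \in \omega}$ of finitely additive measures on $\mathcal{P}(\omega)$, each supported on a finite subset of $\omega$, with the supports pairwise disjoint, such that $\mathcal{I}(\mathscr{F}) = \{A \subseteq \omega : \lim_k \mu_k(A) = 0\}$. The uniform measures $\lambda_n(A) := |A \cap F_n|/|F_n|$ on $F_n$ produce exactly this ideal by definition, but the $F_n$ generally overlap, so the real task is to replace them by a family of finitely additive measures with pairwise disjoint supports that records the same convergence behaviour.

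My plan is to thin out $(F_n)$ by a rapid-growth subsequence. Using $|F_n| \to \infty$, I will recursively pick $n_1 < n_2 < \cdots$ so that $|F_{n_k}| \geq k \cdot |F_{n_1} \cup \cdots \cup F_{n_{k-1}}|$, and set $G_k := F_{n_k} \setminus (F_{n_1} \cup \cdots \cup F_{n_{k-1}})$. These $G_k$ are pairwise disjoint finite subsets of $\omega$ satisfying $|G_k|/|F_{n_k}| \to 1$, and I define $\mu_k(A) := |A \cap G_k|/|F_{n_k}|$, a finitely additive measure supported on $G_k$.

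The inclusion $\mathcal{I}(\mathscr{F}) \subseteq \{A : \mu_k(A) \to 0\}$ is routine: since $G_k \subseteq F_{n_k}$ one has $\mu_k(A) \leq \lambda_{n_k}(A)$, and $\lambda_n(A) \to 0$ trivially descends to the subsequence. Moreover, $|\mu_k(A) - \lambda_{n_k}(A)| \leq |F_{n_k} \cap \bigcup_{j<k} F_{n_j}|/|F_{n_k}| \leq 1/k \to 0$, so the two sequences are asymptotically equivalent.

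The hard part will be closing the reverse inclusion: from $\mu_k(A) \to 0$ one must deduce that $\lambda_m(A) \to 0$ for every $m$, not only along the subsequence $(n_k)$. Indices $m \notin \{n_k\}$ are not directly controlled by the $\mu_k$'s, so the pure subsequence construction above is insufficient on its own. I expect the resolution to require either a more careful choice of $(n_k)$ so that each omitted $F_m$ is density-dominated by some $F_{n_k}$ whose behaviour on $A$ controls $\lambda_m(A)$, or an enrichment of $(\mu_k)$ by additional disjointly-supported auxiliary measures that track the omitted indices (while keeping the supports pairwise disjoint by drawing them from the complements of previously used blocks). This combinatorial refinement is the crux of the argument; the rest is routine verification.
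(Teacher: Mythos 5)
Your proposal is not a complete proof, and you say so yourself: the reverse inclusion $\{A:\mu_k(A)\to 0\}\subseteq \mathcal{I}(\mathscr{F})$ is deferred as ``the crux of the argument,'' and no construction is given that closes it. Moreover, the specific device you set up --- passing to a sparse subsequence $(n_k)$ and disjointifying only the sets $F_{n_k}$ --- cannot be rescued by a cleverer choice of the subsequence alone. Take $F_n=[n!,n!+n]\cap\omega$, so that the $F_n$ are pairwise disjoint from some index on. Any infinite subsequence $(n_k)$ omits infinitely many indices $m$, and the set $A:=\bigcup\{F_m: m\notin\{n_k:k\in\omega\}\}$ then satisfies $\mu_k(A)=0$ for all large $k$ (since $A$ meets no $F_{n_k}$, hence no $G_k$), while $|A\cap F_m|/|F_m|=1$ for infinitely many $m$, so $A\notin\mathcal{I}(\mathscr{F})$. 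Thus every index must be ``seen'' by the disjointly supported family; the auxiliary measures you allude to at the end are not an optional refinement but the entire content of the theorem, and they are absent.

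It is worth saying how this compares with the paper's own argument. The paper's proof is a one-liner: it exhibits the measures $\mu_n(A)=|A\cap F_n|/|F_n|$, each supported on the finite set $F_n$, observes that $\mathcal{I}(\mathscr{F})=\{A\subseteq\omega:\lim_n\mu_n(A)=0\}$ by definition, and stops --- it performs no disjointification at all, even though the definition of density ideal recalled in the introduction requires the supports to be pairwise disjoint. So you have correctly isolated a genuine subtlety that the published proof glosses over, but your attempt does not resolve it: what is needed is an argument (or a citation to a characterization of density ideals) showing that overlapping supports can always be replaced by disjoint ones without changing the ideal, and neither the sparse-subsequence scheme nor the unspecified ``enrichment'' supplies that.
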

\begin{proof} 
It follows by \eqref{eq:mainideal} that the ideal $\mathcal{I}(\mathscr{F})$ corresponds to
$$
\textstyle \{A\subseteq \omega: \lim_{n\to \infty} \mu_n(A)=0\},
$$
where, for each $n \in \omega$, $\mu_n: \mathcal{P}(\omega) \to \mathbf{R}$ is the finitely additive probability measure defined by
$$
\forall A\subseteq \omega, \quad \mu_n(A)= |A\cap F_n|/|F_n|.
$$
This concludes the proof. 
\end{proof}

It is worth noticing that every density ideal is an analytic P-ideal, cf. \cite{aa}. It is known that every density ideal is also meager. Hence Theorem \ref{thm} implies, thanks to \cite[Corollary 1.3]{Leo2018}, the following consequence:
\begin{corollary}
The set of bounded real sequences which are $\mathcal{I}(\mathscr{F})$-convergent to $0$ is not complemented in $\ell_\infty$. 
\end{corollary}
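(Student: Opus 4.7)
The plan is to obtain the corollary as a direct consequence of \cite[Corollary 1.3]{Leo2018}, which (as I read its statement) asserts that whenever $\mathcal{I}$ is a meager analytic P-ideal on $\omega$, the closed subspace of $\ell_\infty$ consisting of bounded real sequences that are $\mathcal{I}$-convergent to $0$ fails to be complemented. Thus the whole task reduces to verifying that $\mathcal{I}(\mathscr{F})$ is both an analytic P-ideal and meager.

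Theorem \ref{thm} identifies $\mathcal{I}(\mathscr{F})$ as a density ideal, and Farah's structure theorem recorded in \cite{aa} gives that every density ideal is automatically an analytic P-ideal; concretely, $\mathcal{I}(\mathscr{F})$ coincides with the exhaustive ideal of the lower semicontinuous submeasure $\varphi(A) := \sup_n \mu_n(A)$, where $\mu_n(A) = |A\cap F_n|/|F_n|$. For meagerness I would invoke Talagrand's theorem: every proper ideal on $\omega$ with the Baire property is meager, and analytic ideals trivially have the Baire property; alternatively, a direct exhaustion argument within the $F_{\sigma\delta}$ presentation of an analytic P-ideal yields the same conclusion.

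Once both properties are in hand, \cite[Corollary 1.3]{Leo2018} is applied as a black box to conclude. The main obstacle is, of course, packed into that black box: its proof presumably adapts Phillips' classical non-complementation argument for $c_0$ in $\ell_\infty$ to the ideal-convergence setting, extracting from the meagerness hypothesis an uncountable almost disjoint family of $\mathcal{I}$-small subsets of $\omega$ and deriving a contradiction from any supposed bounded projection onto $c_0(\mathcal{I}) \cap \ell_\infty$. I would not attempt to reproduce that argument here; the contribution of the present note is simply to recognise the concrete class $\mathcal{I}(\mathscr{F})$ as falling under its hypotheses, which is exactly what Theorem \ref{thm} accomplishes.
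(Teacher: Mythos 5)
Your proposal is correct and follows essentially the same route as the paper: both reduce the corollary to checking that $\mathcal{I}(\mathscr{F})$ is a meager analytic P-ideal (via Theorem \ref{thm} and Farah's result that density ideals are analytic P-ideals) and then apply \cite[Corollary 1.3]{Leo2018} as a black box. The only cosmetic difference is that the paper justifies meagerness by citing the known fact that every density ideal is meager, whereas you derive it from the Jalali-Naini--Talagrand theorem applied to an analytic (hence Baire-property) admissible ideal; both justifications are valid.
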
 

\begin{remark}\label{rmk}
By a classical result of Solecki, an (not necessarily proper or admissible) ideal $\mathcal{I}$ is an analytic P-ideal if and only if 
$$
\textstyle \mathcal{I}=\mathrm{Exh}(\varphi):=\{A\subseteq \omega: \lim_{n\to \infty}\varphi(A\setminus [0,n])=0\},
$$
for some lower semicontinuous submeasure $\varphi: \mathcal{P}(\omega)\to [0,\infty]$ (that is, $\varphi$ is monotone, subadditive, $\varphi(\emptyset)=0$, and $\varphi(A)=\lim_n \varphi(A\cap [0,n])$ for all $A\subseteq \omega$), cf. \cite{aa}. 
Accordingly, it is not difficult to see that, in our case, $\mathcal{I}(\mathscr{F})=\mathrm{Exh}(\varphi)$, where $\varphi$ is the lower semicontinuous submeasure defined by 
\begin{equation}\label{eq:varphi}
\textstyle 
\forall A\subseteq \omega,\quad \varphi(A):=
\sup_{n \in \omega} \mu_n(A).
\end{equation}
The proof is straightforward and left to the reader.
\end{remark}

We conclude with another property of all ideals $\mathcal{I}(\mathscr{F})$.

\begin{theorem}
$\mathcal{I}(\mathscr{F})$ is not an $F_{\sigma}$-ideal.
\end{theorem}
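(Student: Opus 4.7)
My plan is to derive the theorem by contradiction: assume $\mathcal{I}(\mathscr{F})$ is $F_\sigma$, use Mazur's representation to obtain a lower semicontinuous submeasure $\psi$ with $\mathcal{I}(\mathscr{F})=\mathrm{Fin}(\psi)$, and then exhibit a single set $A\in\mathcal{I}(\mathscr{F})$ with $\psi(A)=\infty$---the desired contradiction.

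By Mazur's theorem (cf.\ \cite{aa}), every $F_\sigma$-ideal has the form $\mathrm{Fin}(\psi):=\{A\subseteq\omega:\psi(A)<\infty\}$ for some lower semicontinuous submeasure $\psi:\mathcal{P}(\omega)\to[0,\infty]$. Fix such $\psi$. I first note that $\bigcup_n F_n\notin\mathcal{I}(\mathscr{F})$, since $|(\bigcup_n F_n)\cap F_m|/|F_m|=1$ for every $m$; hence $\psi(\bigcup_n F_n)=\infty$, and by lower semicontinuity, $\psi\bigl(\bigcup_{n\le N} F_n\bigr)\to\infty$ as $N\to\infty$.

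The construction of $A$ proceeds inductively. Choose $0=N_0<N_1<N_2<\cdots$ with $\psi\bigl(\bigcup_{n\le N_k}F_n\bigr)-\psi\bigl(\bigcup_{n\le N_{k-1}}F_n\bigr)\ge k^2$; by subadditivity, $\psi(G_k)\ge k^2$ for $G_k:=\bigcup_{n\le N_k}F_n\setminus\bigcup_{n\le N_{k-1}}F_n$. Next I partition $G_k$ into $k$ pieces $P_k^{(1)},\dots,P_k^{(k)}$ with $|P_k^{(i)}\cap F_n|/|F_n|\le 2/k$ for every $i$ and every $n\le N_k$---a balanced partition obtainable by a random assignment of elements (Chernoff-type concentration, valid since $|F_n|\to\infty$). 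By subadditivity, at least one piece satisfies $\psi(P_k^{(i)})\ge k$; denote it $A_k$. Set $A:=\bigcup_k A_k$. Monotonicity of $\psi$ gives $\psi(A)\ge\psi(A_k)\ge k$ for every $k$, so $\psi(A)=\infty$. On the other hand, provided the $N_k$ grow sufficiently rapidly, a direct estimate shows $|A\cap F_n|/|F_n|\to 0$, so $A\in\mathcal{I}(\mathscr{F})=\mathrm{Fin}(\psi)$, contradicting $\psi(A)=\infty$.

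The main obstacle is controlling density in the inductive step: the pieces $A_k$ must simultaneously have small density in every $F_n$ (to keep $A$ in the ideal) and large $\psi$-value (to make $\psi(A)$ diverge). Subadditivity yields the $\psi$-bound immediately once $G_k$ is split into $k$ parts, but the density requirement is subtle when the $F_n$ overlap substantially---as in the nested case $F_n=\{1,\dots,n\}$, where every $A_k$ sits inside later $F_n$'s. This is handled by the random partition within the ``current'' block $G_k$ together with a sufficiently fast growth of the $N_k$ to suppress the carry-over $\sum_{j<k}|A_j\cap F_n|/|F_n|$; the explicit growth rate (e.g.\ $N_k$ super-polynomial in $k$) mirrors the standard argument that the classical density zero ideal $\mathcal{Z}$ is not $F_\sigma$.
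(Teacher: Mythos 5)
Your overall strategy---Mazur's representation $\mathcal{I}(\mathscr{F})=\mathrm{Fin}(\psi)$, followed by the construction of a set $A$ of $\mathscr{F}$-density zero with $\psi(A)=\infty$---is sound, and both the choice of the blocks $G_k$ via subadditivity and the suppression of the carry-over terms $\sum_{j<k}|A_j\cap F_n|/|F_n|$ by rapid growth of the $N_k$ are correct. The gap sits exactly at the step you flag as the main obstacle: the existence of a partition of $G_k$ into $k$ pieces with $|P_k^{(i)}\cap F_n|/|F_n|\le 2/k$ for all $i$ and all $n\le N_k$. The Chernoff-plus-union-bound argument needs the number of constraints, roughly $k\,(N_k-N_{k-1})$, to be dominated by $\exp\bigl(c\min\{|F_n|:N_{k-1}<n\le N_k\}/k\bigr)$; but $N_k$ is forced to be large enough for $\psi\bigl(\bigcup_{n\le N_k}F_n\bigr)$ to increase by $k^2$, and since $\psi$ is an arbitrary submeasure supplied by the contradiction hypothesis, you have no control of $N_k-N_{k-1}$ in terms of $\min_n|F_n|$. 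Worse, the balanced partition can genuinely fail to exist: if on a block $B\subseteq G_k$ of size $L$ the family $\{F_n:N_{k-1}<n\le N_k\}$ contains \emph{all} subsets of $B$ of size $L/10$, then any $k$-partition has a piece $P$ with $|P\cap B|\ge L/k$, and some such $F_n$ meets $P$ in at least $\min(L/10,\,L/k)$ elements, which exceeds $2|F_n|/k=L/(5k)$ for every $k\ge 3$. Such configurations are compatible with $|F_n|\to\infty$, and the resulting ideal is still not $F_\sigma$ (it coincides with the density ideal of the blocks $B$), so the theorem is not refuted---but your construction breaks on it, and randomness does not help because the bad events are exponentially many and heavily dependent.

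To repair the argument one must replace the raw family $(F_n)$ by something more structured before partitioning, which is in effect what the paper does: it represents $\mathcal{I}(\mathscr{F})$ as $\mathrm{Exh}(\varphi)$ with $\varphi=\sup_n\mu_n$ and invokes \cite[Theorem 2.5]{MarekLeon}, by which $\mathrm{Exh}(\varphi)$ is not $F_\sigma$ once $\omega$ admits a partition $\{G_n\}$ with $\|G_n\|_\varphi>0$ for all $n$ and $\lim_n\|\bigcup_{k>n}G_k\|_\varphi=0$. That route verifies a condition inside the \emph{known} submeasure $\varphi$ rather than diagonalizing against an unknown $\psi$, and so avoids the uncontrolled quantity $N_k$ on which your argument founders.
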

\begin{proof}
Let $\varphi$ be the lower semicontinuous submeasure defined in \eqref{eq:varphi}. By Remark \ref{rmk}, we have that $\mathcal{I}(\mathscr{F})=\mathrm{Exh}(\varphi)$, hence
$$
\mathcal{I}(\mathscr{F})=
\left\{A\subseteq \omega: \lim_{n\to \infty}\sup_{k \in \omega} \mu_k(A\setminus [0,n])=0 \right\}=\{A\subseteq \omega: \|A\|_\varphi=0\},
$$
where $\|A\|_\varphi:=\limsup_{n\to \infty}\mu_n(A)$. At this point, define recursively, for each $n \in \omega$, the following sets:
$$
\textstyle 
P_n:=\left(\bigcup_{k \in \omega} \left[\max F_k- \frac{|F_k|}{2^n}, \max F_k-\frac{|F_k|}{2^{n+1}}\right] \cap \omega\right)\setminus P_{n-1}.
$$
where by convention $P_{-1}:=\emptyset$. Lastly, set $G_0:=P_0\cup (\omega \setminus \bigcup_{n\in \omega} P_n)$ and $G_n:=P_n$ for all nonzero $n \in \omega$. 

By construction we have that $\{G_n: n \in \omega\}$ is a partition of $\omega$ such that: $\|G_n\|_\varphi>0$ for all $n$ and $\lim_n \|\bigcup_{k>n}G_k\|_\varphi=0$. 

The existence of such partition implies, thanks to \cite[Theorem 2.5]{MarekLeon}, that $\mathcal{I}(\mathscr{F})$ is not an $F_{\sigma}$-ideal.
\end{proof}

\medskip

\textbf{Acknowledgments.} The author is greateful to Paolo Leonetti (Bocconi University, Italy) for useful discussions.

\end{document}